\newtheorem{theorem}{Theorem}[section]
\newtheorem{lemma}[theorem]{Lemma}
\begin{document}

	\title[Merca's Cubic Partitions]{Elementary Proofs of Infinite Families of Congruences for Merca's Cubic Partitions}

	\author{Robson da Silva}
	\address{Universidade Federal de S\~ao Paulo, S\~ao Jos\'e dos Campos, SP 12247--014, Brazil}
	\email{silva.robson@unifesp.br}

	\author{James A. Sellers}
	\address{Department of Mathematics and Statistics, University of Minnesota Duluth, Duluth, MN 55812, USA}
	\email{jsellers@d.umn.edu}

	\subjclass[2010]{11P83, 05A17}
	
	\keywords{partitions, cubic partitions, congruences, generating functions}
	
	
	\maketitle
	
	
\begin{abstract}
Recently, using modular forms and Smoot's {\tt Mathematica} implementation of Radu's algorithm for proving partition congruences, Merca proved the following two congruences:  For all $n\geq 0,$
\begin{align*}
A(9n+5) & \equiv 0 \pmod{3}, \\
A(27n+26) & \equiv 0 \pmod{3}.
\end{align*}
Here $A(n)$ is closely related to the function which counts the number of {\it cubic partitions}, partitions wherein the even parts are allowed to appear in two different colors.  Indeed, $A(n)$ is defined as the difference between the number of cubic partitions of $n$ into an even
numbers of parts and the number of cubic partitions of $n$ into an odd numbers of
parts.

In this brief note, we provide elementary proofs of these two congruences via classical generating function manipulations.  We then prove two infinite families of non--nested Ramanujan--like congruences modulo 3 satisfied by $A(n)$ wherein Merca's original two congruences serve as the initial members of each family.  
\end{abstract}

\section{Introduction}  
In a recent work, Merca \cite{Merca} extensively studied the function which he called $A(n)$ which is defined to be the difference between the number of cubic partitions of $n$ into an even
numbers of parts and the number of cubic partitions of $n$ into an odd numbers of
parts.  (Cubic partitions were introduced by Chan \cite{Chan1, Chan2} to be integer partitions in which even parts are allowed to appear in two different colors.  Chen introduced these in connection with Ramanujan’s cubic continued fraction.)  
Merca notes that the generating function for $A(n)$ is given by 
\begin{equation*}
    \sum_{n=0}^{\infty} A(n)q^n  = (q;q^2)_{\infty}(q^2;q^4)_{\infty} = \frac{f_1}{f_4}
\end{equation*}
where the $q$--Pochhammer symbol $(a;q)_\infty$ is defined by 
$$
(a;q)_\infty = (1-a)(1-aq)(1-aq^2)(1-aq^3)\dots
$$
and $f_a^b$ is defined by 
$$
f_a^b = (q^a;q^a)_\infty^b
$$
for positive integers $a$ and any integer $b.$

In \cite{Merca}, Merca proved the following two Ramanujan--like congruences satisfied by $A(n):$
\begin{theorem}[{\cite[Theorem 1.10]{Merca}}] 
\label{Merca1}
For all $n \geq 0$,
\begin{align}
A(9n+5) & \equiv 0 \pmod{3}, \label{m1} \\
A(27n+26) & \equiv 0 \pmod{3}.\label{m2}
\end{align}
\label{T1}
\end{theorem}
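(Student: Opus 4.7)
The generating function for $A(n)$ is
\[
\sum_{n\geq 0} A(n) q^n \;=\; \frac{f_1}{f_4}.
\]
My first move is to reduce modulo $3$: since $(1-x)^3 \equiv 1-x^3 \pmod 3$ forces $f_a^3 \equiv f_{3a} \pmod 3$ for every $a\ge 1$, applying this to the denominator yields
\[
\sum_{n\geq 0} A(n) q^n \;\equiv\; \frac{f_1\, f_4^{\,2}}{f_{12}} \pmod 3.
\]
The factor $f_{12}$ is a power series in $q^{12}$, hence a function of $q^3$, so it is inert under any 3-dissection in the variable $q$ and will only re-scale the final eta-quotient once I pass to $u = q^3$. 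The core task is therefore to 3-dissect $f_1 f_4^{\,2}$ modulo $3$.

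To that end, I would first obtain a 3-dissection of $f_1$ by sorting Euler's pentagonal number theorem $f_1 = \sum_{k \in \mathbb{Z}}(-1)^k q^{k(3k-1)/2}$ according to $k \bmod 3$ and applying the Jacobi triple product to each sub-sum. The outcome is a decomposition
\[
f_1 \;=\; J_0(q^3) \,-\, q\, J_1(q^3) \,-\, q^2 J_2(q^3),
\]
where each $J_i$ is an explicit Ramanujan theta function of the form $f(-u^{a},-u^{9-a})$ with $u = q^3$. Substituting $q \mapsto q^4$ (and using $q^4 = q \cdot q^3$, $q^8 = q^2 \cdot q^6$ to move the external powers of $q$ outside the $J_i$) gives an analogous 3-dissection of $f_4$; squaring and collecting then produces one for $f_4^{\,2}$. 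Multiplying the two dissections together and sorting by the residue of the exponent modulo $3$ yields explicit formulas, modulo $3$, for each of the generating functions $\sum_{n \geq 0} A(3n+r) q^n$, $r \in \{0,1,2\}$.

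To conclude $A(9n+5) \equiv 0 \pmod 3$, I apply the same 3-dissection procedure a second time to $\sum_{n \geq 0} A(3n+2) q^n$: the resulting middle piece, which encodes $A(9n+5)$, must be shown to vanish modulo $3$. For $A(27n+26) \equiv 0 \pmod 3$, a third 3-dissection applied to the $A(9n+8)$ piece finishes the argument. The principal obstacle I foresee is the algebraic bookkeeping in these nested dissections: each multiplication produces a sum of products of several theta quotients, and the crucial cancellations modulo $3$ only surface after repeated use of $f_a^3 \equiv f_{3a} \pmod 3$ together with classical theta identities (of the type $(u^a,u^{9-a},u^9;u^9)_\infty$-factorisations) to collapse each intermediate expression into a single eta-quotient before undertaking the next dissection.
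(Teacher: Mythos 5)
Your overall strategy --- iterated $3$-dissection of the generating function modulo $3$ --- is the same one the paper uses, and your opening reduction $\frac{f_1}{f_4}\equiv \frac{f_1 f_4^2}{f_{12}} \pmod 3$ with $f_{12}$ inert under the dissection is correct. But the proposal stops exactly where the proof begins, and the specific route you choose creates an obstacle you do not address. After the first dissection, the piece encoding $A(3n+2)$ is, in your setup, a sum of six products of the form $q^{c}\,f(-u^{a},-u^{9-a})\,f(-u^{4b},-u^{4(9-b)})/f_{12}$ with $u=q^3$. These factors are genuine theta functions, not eta quotients: $(u^{a},u^{9-a},u^{9};u^{9})_\infty$ is a single $(q^m;q^m)_\infty$-product only when $a=3$. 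So your stated plan to ``collapse each intermediate expression into a single eta-quotient'' cannot succeed as described --- in fact the generating function for $A(3n+2)$ modulo $3$ is a sum of two distinct eta quotients, not one --- and, more importantly, you give no mechanism for carrying out the second and third $3$-dissections on these theta-function pieces. Sorting pentagonal numbers by residue works for $f_1$ itself, but there is no analogous elementary sorting for $f(-q,-q^8)$ or $f(-q^2,-q^7)$, and this is precisely where all the work of the theorem lies.

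The paper sidesteps this difficulty by writing $f_1/f_4=\phi(-q)/\psi(-q)$ and drawing on known $3$-dissections of $\phi(-q)$, $1/\psi(-q)$, $1/\psi(q)$ and $1/\phi(-q)$ whose components are eta quotients; after each extraction the result is massaged, using $f_a^3\equiv f_{3a}\pmod 3$, back into combinations of $1/\psi(-q)$- and $1/\phi(-q)$-type factors times eta quotients in $q^3$, so that the same dissection lemmas can be applied at the next stage. Without an analogous supply of dissection identities for the theta functions your route produces, the ``algebraic bookkeeping'' you flag as the principal obstacle is not mere bookkeeping --- it is the missing proof.
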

Merca's proof of these two congruences relies solely on Smoot's {\tt Mathematica} implementation of Radu's algorithm for proving partition congruences (which relies heavily on the machinery of modular forms).  Indeed, Merca's proof of (\ref{m1}) involves finding a generating function for $A(9n+5)$ which is produced by Smoot's {\tt Mathematica} package. In this case, the generating function in question turns out to be a non--trivial linear combination of four ratios of eta products (in essence, ratios of products whose terms involve $q$--Pochhammer symbols as defined above), while the generating function for $A(27n+26)$ contains a dozen such terms.  

Our goal in the work below is to provide elementary proofs of Merca's two congruences via classical generating function manipulations and dissections.  We then significantly extend Merca's work on such divisibility properties for $A(n)$ by proving two infinite families of non--nested Ramanujan--like congruences modulo 3 satisfied by $A(n)$ wherein Merca's two congruences above serve as the initial members of each family. Indeed, we will prove the following: 

For all $j\geq 0$ and all $n\geq 0,$
$$
A\left(9^{j+1}n + \frac{39\cdot9^j+1}{8}\right) \equiv 0 \pmod{3}
$$
and 
$$
A\left(3\cdot 9^{j+1}n + \frac{23\cdot9^{j+1}+1}{8}\right) \equiv 0 \pmod{3}.
$$
In order to accomplish the above goals, we require a few classical tools.  First, we recall Ramanujan's theta functions
\begin{align}
f(a,b) & :=\sum_{n=-\infty}^\infty a^\frac{n(n+1)}{2}b^\frac{n(n-1)}{2}, \mbox{ for } |ab|<1, \nonumber \\
\phi(q) & := f(q,q) = \sum_{n=-\infty}^{\infty} q^{n^2} = \frac{(q^2;q^2)_{\infty}^5}{(q;q)_{\infty}^2(q^4;q^4)_{\infty}^2}, 
\nonumber
\\
\psi(q) & := f(q,q^3) = \sum_{n=0}^{\infty} q^{n(n+1)/2} = \frac{(q^2;q^2)_{\infty}^2}{(q;q)_{\infty}}. 
\nonumber
\end{align}
These functions satisfy many interesting properties (see Entries 18, 19, and 22 in \cite{Berndt}), including:
\begin{align}
\phi(-q) & = \frac{(q;q)_{\infty}^2}{(q^2;q^2)_{\infty}}, 
\nonumber 
\\
\psi(-q) & = \frac{(q;q)_{\infty}(q^4;q^4)_{\infty}}{(q^2;q^2)_{\infty}}. 
\nonumber
\end{align}

As noted above, the generating function for $A(n)$ is given by 
\begin{align}
    \sum_{n=0}^{\infty} A(n)q^n & = (q;q^2)_{\infty}(q^2;q^4)_{\infty} \nonumber \\
    & = \frac{f_1}{f_4} \nonumber \\
    & = \frac{f_1^2}{f_2}\frac{f_2}{f_1f_4} \nonumber \\
    & = \frac{\phi(-q)}{\psi(-q)}. \label{gf1}
\end{align}

In order to prove the congruences in question, we require a few well--known $q$--series dissections.

\begin{lemma}
We have
\begin{equation*}
    \phi(-q) = \frac{f_9^2}{f_{18}} -2q\frac{f_3f_{18}^2}{f_6f_9}. 
\end{equation*}
\label{lemma1}
\end{lemma}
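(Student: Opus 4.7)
The plan is to derive this 3-dissection directly from the series definition of $\phi(-q)$, by sorting the terms according to the residue of the summation index modulo $3$, and then converting the resulting theta sums to eta quotients via the Jacobi triple product.

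First I would start from
$$
\phi(-q) = \sum_{n=-\infty}^{\infty} (-1)^n q^{n^2}
$$
and split the sum according as $n = 3m$, $n = 3m+1$, or $n = 3m-1$. Since $(3m)^2 = 9m^2$ and $(3m\pm 1)^2 = 9m^2 \pm 6m + 1$, and since $(-1)^{3m+\epsilon} = (-1)^m(-1)^\epsilon$, this gives
$$
\phi(-q) = \sum_{m} (-1)^m q^{9m^2} \; - \; q\sum_{m} (-1)^m q^{9m^2+6m} \; - \; q\sum_{m} (-1)^m q^{9m^2-6m}.
$$
The substitution $m \mapsto -m$ shows that the last two sums are equal, so the right-hand side collapses to a single theta sum with a factor of $-2q$.

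Next I would recognize each of the two remaining sums as a Ramanujan theta function. The sum $\sum_m (-1)^m q^{9m^2}$ is $f(-q^9,-q^9) = \phi(-q^9)$. For the other, checking the exponent in $f(a,b) = \sum_n a^{n(n+1)/2} b^{n(n-1)/2}$ with $a=-q^{15}$, $b=-q^3$ gives $(-1)^n q^{9n^2+6n}$, so the sum is $f(-q^{15},-q^3)$. Thus
$$
\phi(-q) = \phi(-q^9) - 2q\,f(-q^{15},-q^3).
$$

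Finally I would apply the Jacobi triple product identity
$$
f(a,b) = (-a;ab)_\infty(-b;ab)_\infty(ab;ab)_\infty
$$
to each piece. This immediately yields $\phi(-q^9) = \frac{f_9^2}{f_{18}}$. For $f(-q^{15},-q^3)$ it gives $(q^{15};q^{18})_\infty(q^3;q^{18})_\infty(q^{18};q^{18})_\infty$; combining the first two factors via
$$
(q^3;q^{18})_\infty(q^{15};q^{18})_\infty = \frac{(q^3;q^6)_\infty}{(q^9;q^{18})_\infty} = \frac{f_3/f_6}{f_9/f_{18}} = \frac{f_3 f_{18}}{f_6 f_9}
$$
produces $f(-q^{15},-q^3) = \frac{f_3 f_{18}^2}{f_6 f_9}$, which yields the claimed identity. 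The only real obstacle is the routine bookkeeping of $q$-Pochhammer simplifications; the combinatorial content is entirely in the dissection step.
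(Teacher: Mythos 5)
Your proof is correct and complete. The paper does not actually prove this lemma --- it simply cites Hirschhorn \cite[Eq. 14.3.4]{H} --- so your argument supplies a self-contained derivation of what the authors treat as a known input, and it is essentially the standard route one would find in that reference. Every step checks out: the split of $\sum_n (-1)^n q^{n^2}$ by the residue of $n$ modulo $3$ using $(3m\pm1)^2 = 9m^2 \pm 6m + 1$ and $(-1)^{n^2}=(-1)^n$; the symmetry $m\mapsto -m$ collapsing the two odd-residue pieces into $-2q\sum_m(-1)^m q^{9m^2+6m}$; the identification of that sum as $f(-q^{15},-q^3)$ (the sign exponent $n(n+1)/2+n(n-1)/2=n^2$ gives $(-1)^n$, and the $q$-exponent is $9n^2+6n$); and the Jacobi triple product evaluations, where
\begin{equation*}
(q^3;q^{18})_\infty (q^{15};q^{18})_\infty = \frac{(q^3;q^6)_\infty}{(q^9;q^{18})_\infty} = \frac{f_3 f_{18}}{f_6 f_9}
\end{equation*}
correctly yields $f(-q^{15},-q^3) = f_3 f_{18}^2/(f_6 f_9)$ and $\phi(-q^9)=f_9^2/f_{18}$. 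The only stylistic remark is that you could shortcut the last product manipulation by quoting $\phi(-q)=f_1^2/f_2$ and $\psi(-q)=f_1f_4/f_2$ (both stated in the paper's introduction), since $f(-q^{15},-q^3)=q^{-1}\bigl(\psi(-q)-$ even part$\bigr)$ is less direct than what you did; your bookkeeping is fine as written.
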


\begin{proof}
A proof of this identity can be seen in \cite[Eq. 14.3.4]{H}.
\end{proof}

\begin{lemma}
We have
\begin{equation*}
\frac{1}{\psi(-q)} = \frac{f_{18}^9}{f_3^2f_9^3f_{12}^2f_{36}^3} + q\frac{f_6^2f_{18}^3}{f_3^3f_{12}^3} + q^2\frac{f_6^4f_9^3f_{36}^3}{f_3^4f_{12}^4f_{18}^3}. 
\end{equation*}
\label{lemma2}
\end{lemma}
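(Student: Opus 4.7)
The plan is to establish the 3-dissection of $1/\psi(-q) = f_2/(f_1 f_4)$ by applying a roots-of-unity filter to a 3-dissection of $\psi(-q)$ itself, then converting each resulting piece into the stated eta-quotient form. Since Lemma~\ref{lemma1} is handled by a direct citation to Hirschhorn's book, the natural route is likely the corresponding identity in \cite{H}; what follows outlines an independent derivation.

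First, I would invoke the standard 3-dissection
\[
\psi(q) = f(q^3, q^6) + q\,\psi(q^9),
\]
obtained by splitting the triangular numbers $n(n+1)/2$ according to $n \bmod 3$ and observing that none is congruent to $2 \pmod 3$. Substituting $q \mapsto -q$ yields $\psi(-q) = A - qB$, where $A = f(-q^3, q^6)$ and $B = \psi(-q^9)$ are both series in $q^3$. With $\omega = e^{2\pi i/3}$ one has $\psi(-\omega q) = A - \omega q B$ and $\psi(-\omega^2 q) = A - \omega^2 q B$, so
\[
\frac{1}{\psi(-q)} = \frac{\psi(-\omega q)\,\psi(-\omega^2 q)}{\psi(-q)\,\psi(-\omega q)\,\psi(-\omega^2 q)} = \frac{A^2 + qAB + q^2 B^2}{A^3 - q^3 B^3}.
\]
Because $A^3 - q^3 B^3$ is itself a function of $q^3$, this decomposition immediately supplies the three pieces of the 3-dissection: the coefficients attached to $q^0$, $q^1$, and $q^2$ are $A^2/(A^3 - q^3 B^3)$, $AB/(A^3 - q^3 B^3)$, and $B^2/(A^3 - q^3 B^3)$ respectively.

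The main obstacle is matching these abstract ratios to the explicit eta-quotients in the statement. The denominator can be evaluated via $\prod_{\omega^3 = 1}(1 - \omega^j x) = 1 - x^3$ applied factor-by-factor to $\psi(-q) = f_1 f_4 / f_2$, giving
\[
\psi(-q)\,\psi(-\omega q)\,\psi(-\omega^2 q) = \frac{f_3^4\,f_{12}^4\,f_{18}}{f_6^4\,f_9\,f_{36}}.
\]
Each of $A$ and $B$ must similarly be reduced to an eta-quotient via the Jacobi triple product together with auxiliary identities such as $(a;q)_\infty (-a;q)_\infty = (a^2; q^2)_\infty$ and $(c;-q)_\infty = (c;q^2)_\infty (-cq;q^2)_\infty$. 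The three ratios $A^2/D$, $AB/D$, $B^2/D$ are then simplified to match the claimed exponents of $f_3, f_6, f_9, f_{12}, f_{18}, f_{36}$. This last step, while routine, is the bulk of the work; a practical shortcut is to verify the identity directly by multiplying both sides by $\psi(-q) = f_1 f_4/f_2$ and confirming that the product collapses to $1$ using the same bank of theta-function identities.
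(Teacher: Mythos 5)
Your outline is correct, but it takes a genuinely different route from the paper, which offers no derivation at all for this lemma: the authors simply cite Toh \cite[Lemma 2.1]{Toh}. Your rationalization argument is the standard self-contained way to get such a dissection, and every formula you assert checks out. Writing $\psi(-q)=A-qB$ with $A=f(-q^3,q^6)$ and $B=\psi(-q^9)$ both series in $q^3$ is legitimate (triangular numbers are never $2 \bmod 3$), and your denominator is right: since $\prod_{j=0}^{2}(q\omega^j;q\omega^j)_\infty=f_3^4/f_9$ and likewise for $f_2$ and $f_4$, one gets $D=\psi(-q)\psi(-\omega q)\psi(-\omega^2 q)=f_3^4f_{12}^4f_{18}/(f_6^4f_9f_{36})$. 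The one step you defer as ``routine'' does close, and it is worth recording explicitly since it is where all the content lives: the Jacobi triple product gives
\begin{equation*}
f(-q,q^2)=(q;-q^3)_\infty(-q^2;-q^3)_\infty(-q^3;-q^3)_\infty=\frac{f_1f_4f_6^5}{f_2^2f_3^2f_{12}^2},
\end{equation*}
so that $A=\dfrac{f_3f_{12}f_{18}^5}{f_6^2f_9^2f_{36}^2}$, while $B=\dfrac{f_9f_{36}}{f_{18}}$; a direct computation then shows $A^2/D$, $AB/D$, and $B^2/D$ equal precisely the three quotients in the statement. As for what each approach buys: the citation keeps the paper short, whereas your derivation makes the lemma self-contained and, with the same $A$ and $B$ (or with $\psi(q)=f(q^3,q^6)+q\psi(q^9)$ in place of its image under $q\mapsto -q$), also yields the companion dissection of $1/\psi(q)$ in Lemma \ref{lemma4} at no extra cost.
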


\begin{proof}
A proof of this identity appears in \cite[Lemma 2.1]{Toh}.
\end{proof}

\begin{lemma}
We have
\begin{equation*}
\frac{1}{\phi(-q)} = \frac{f_6^4f_{9}^6}{f_3^8f_{18}^3} + 2q\frac{f_6^3f_{9}^3}{f_3^7} + 4q^2\frac{f_6^2f_{18}^3}{f_3^6}. 
\end{equation*}
\label{lemma3}
\end{lemma}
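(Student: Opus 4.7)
My plan is to deduce Lemma~3 directly from Lemma~1 by a rationalization of a binomial denominator, reducing the whole assertion to a single auxiliary identity. By Lemma~1 we may write
$$\phi(-q) = A - 2qB, \qquad A := \frac{f_9^2}{f_{18}}, \qquad B := \frac{f_3 f_{18}^2}{f_6 f_9},$$
and the key observation is that $A$ and $B$ are both power series in $q^3$, since each factor $f_{3k}$ is. Applying the algebraic identity $\frac{1}{u-v} = \frac{u^2 + uv + v^2}{u^3 - v^3}$ with $u = A$ and $v = 2qB$ yields
$$\frac{1}{\phi(-q)} = \frac{A^2 + 2qAB + 4q^2 B^2}{A^3 - 8q^3 B^3}.$$
The denominator on the right is once again a power series in $q^3$, while the three summands in the numerator, $A^2$, $2qAB$, and $4q^2 B^2$, lie in the $q^0$, $q^1$, and $q^2$ residue classes modulo $q^3$, respectively. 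Hence this display already has the shape of the claimed $3$-dissection.

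The whole lemma therefore reduces to verifying the closed-form evaluation
$$A^3 - 8q^3 B^3 = \frac{f_3^8 f_{18}}{f_6^4 f_9^2}. \qquad (\ast)$$
Once $(\ast)$ is in hand, dividing $A^2$, $AB$, and $B^2$ by the right-hand side and simplifying Pochhammer powers produces exactly
$$\frac{f_6^4 f_9^6}{f_3^8 f_{18}^3}, \qquad \frac{f_6^3 f_9^3}{f_3^7}, \qquad \frac{f_6^2 f_{18}^3}{f_3^6},$$
which are the three coefficients of $1, 2q, 4q^2$ in the statement of Lemma~3.

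The main obstacle is proving $(\ast)$. Since both sides are power series in $q^3$, I rewrite the identity in the base variable $Q := q^3$, using $f_{3k}(q) = f_k(Q)$. After this rescaling $(\ast)$ becomes the explicit two-term identity
$$\frac{f_3^6}{f_6^3} - 8q\, \frac{f_1^3 f_6^6}{f_2^3 f_3^3} = \frac{f_1^8 f_6}{f_2^4 f_3^2},$$
which I recognize as a $2$-dissection of $\phi(-q^3)^3 = f_3^6/f_6^3$ into its even- and odd-indexed pieces. I would establish this either by citation to the corresponding entry in Hirschhorn's text~\cite{H}, paralleling the style of the proofs of Lemmas~1 and~2, or by a short direct verification: multiplying through to clear denominators and comparing both sides against the Jacobi triple product expansions of $f_1^3$ and $f_3^3$ reduces the identity to a standard evaluation involving $\psi(q^3)$ and $\phi(-q)$ of the sort recorded in \cite{Berndt}. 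All that then remains is the bookkeeping of the cancellations that derive the three stated coefficients from $(\ast)$, which is routine.
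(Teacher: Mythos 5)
Your reduction is sound and, unlike the paper's proof of this lemma (which is simply a citation to \cite{HS}), it actually derives the $3$-dissection of $1/\phi(-q)$ from the $3$-dissection of $\phi(-q)$ in Lemma \ref{lemma1}. The rationalization $1/(A-2qB)=(A^2+2qAB+4q^2B^2)/(A^3-8q^3B^3)$ is correct, $A$ and $B$ are indeed series in $q^3$, and I have checked that, granting your identity $(\ast)$, the quotients of $A^2$, $AB$, $B^2$ by $f_3^8f_{18}/(f_6^4f_9^2)$ give exactly the three stated coefficients. So the lemma really does reduce to $(\ast)$, and $(\ast)$ is a true identity.

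The gap is in your treatment of $(\ast)$ itself. It is \emph{not} a $2$-dissection of $\phi(-q^3)^3$: in your rescaled identity neither $f_1^8f_6/(f_2^4f_3^2)$ nor $q\,f_1^3f_6^6/(f_2^3f_3^3)$ is supported on a single residue class of exponents modulo $2$, so that recognition would lead you astray, and the alternative Jacobi-triple-product route is left entirely unexecuted. Fortunately $(\ast)$ falls out of your own setup. Since $A$ and $B$ are series in $q^3$, replacing $q$ by $\omega^j q$ ($\omega$ a primitive cube root of unity) in Lemma \ref{lemma1} gives $\phi(-\omega^j q)=A-2\omega^j qB$, whence
\begin{equation*}
A^3-8q^3B^3=\prod_{j=0}^{2}\bigl(A-2\omega^j qB\bigr)=\prod_{j=0}^{2}\phi(-\omega^j q)=\prod_{j=0}^{2}\frac{(\omega^j q;\omega^j q)_\infty^2}{(\omega^{2j} q^2;\omega^{2j} q^2)_\infty}.
\end{equation*}
Splitting the product over $n$ according to $n\bmod 3$ (the factors with $3\nmid n$ contribute $1-q^{3n}$, those with $3\mid n$ contribute $(1-q^n)^3$) yields the elementary evaluation $\prod_{j=0}^{2}(\omega^j q;\omega^j q)_\infty=f_3^4/f_9$, and likewise $\prod_{j=0}^{2}(\omega^{2j}q^2;\omega^{2j}q^2)_\infty=f_6^4/f_{18}$, so the triple product equals $(f_3^4/f_9)^2\cdot f_{18}/f_6^4=f_3^8f_{18}/(f_6^4f_9^2)$, which is exactly $(\ast)$. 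With this substitution in place of the ``2-dissection'' claim, your argument becomes a complete and self-contained proof.
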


\begin{proof}
A proof of this result can be seen in \cite[Theorem 1]{HS}.
\end{proof}

\begin{lemma}
We have
\begin{equation*}
\frac{1}{\psi(q)} = \frac{f_3^2f_{9}^3}{f_6^6} - q\frac{f_3^3f_{18}^3}{f_6^7} + q^2\frac{f_3^4f_{18}^6}{f_6^8f_9^3}. 
\end{equation*}
\label{lemma4}
\end{lemma}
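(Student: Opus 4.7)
The plan is to reduce Lemma \ref{lemma4} to Lemma \ref{lemma2} via the substitution $q \mapsto -q$. Replacing $q$ by $-q$ in Lemma \ref{lemma2} turns the left-hand side $1/\psi(-q)$ into $1/\psi(q)$, so it suffices to show that the right-hand side of Lemma \ref{lemma2} collapses, under this substitution, to the right-hand side of Lemma \ref{lemma4}.

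The only tool I need is the elementary identity
\[
f_k(-q) \;=\; ((-q)^k;(-q)^k)_{\infty} \;=\; \frac{f_{2k}^{\,3}}{f_k\,f_{4k}} \qquad (k \text{ odd}),
\]
which follows from splitting $(-q;-q)_\infty = (-q;q^2)_\infty (q^2;q^2)_\infty$ and then using $(-q;q^2)_\infty (q;q^2)_\infty = (q^2;q^4)_\infty$, followed by the scaling $q \mapsto q^k$. Since $f_k(-q) = f_k$ for even $k$, the only nontrivial substitutions needed in the right-hand side of Lemma \ref{lemma2} are
\[
f_3 \longmapsto \frac{f_6^{\,3}}{f_3\,f_{12}}, \qquad f_9 \longmapsto \frac{f_{18}^{\,3}}{f_9\,f_{36}},
\]
together with a sign flip on the middle summand coming from $q \mapsto -q$.

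Carrying out these substitutions term by term, I expect every occurrence of $f_{12}$ and $f_{36}$ to cancel. For instance, the first summand $f_{18}^{\,9}/(f_3^{\,2}\,f_9^{\,3}\,f_{12}^{\,2}\,f_{36}^{\,3})$ should transform into
\[
\frac{f_{18}^{\,9}\,(f_3 f_{12})^2 (f_9 f_{36})^3}{f_6^{\,6}\,f_{18}^{\,9}\,f_{12}^{\,2}\,f_{36}^{\,3}} \;=\; \frac{f_3^{\,2}\,f_9^{\,3}}{f_6^{\,6}},
\]
which is exactly the first summand of Lemma \ref{lemma4}. The same telescoping should reduce the middle and last terms to $-q\,f_3^{\,3}\,f_{18}^{\,3}/f_6^{\,7}$ and $q^2\,f_3^{\,4}\,f_{18}^{\,6}/(f_6^{\,8}\,f_9^{\,3})$, respectively. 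The only real obstacle is careful bookkeeping of the exponents to confirm that every power of $f_{12}$ and $f_{36}$ annihilates across all three summands; no identity beyond the odd-index formula for $f_k(-q)$ is required.
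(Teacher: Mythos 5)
Your proposal is correct, and it takes a genuinely different route from the paper, whose ``proof'' of Lemma \ref{lemma4} is simply a citation to Hirschhorn and Sellers \cite[Lemma 2.2]{HS1}. You instead derive the identity from Lemma \ref{lemma2} via $q\mapsto -q$, using $f_k(-q)=f_{2k}^3/(f_kf_{4k})$ for odd $k$ (and $f_k(-q)=f_k$ for even $k$); your derivation of that auxiliary identity from $(-q;q^2)_\infty(q;q^2)_\infty=(q^2;q^4)_\infty$ is sound, and the substitution $q\mapsto -q$ does turn $1/\psi(-q)$ into $1/\psi(q)$. The two computations you left as ``expected'' do check out: the middle summand of Lemma \ref{lemma2} becomes
\[
-q\,\frac{f_6^2f_{18}^3\,(f_3f_{12})^3}{f_6^9\,f_{12}^3}=-q\,\frac{f_3^3f_{18}^3}{f_6^7},
\]
and the last becomes
\[
q^2\,\frac{f_6^4\,f_{18}^9\,f_{36}^3\,(f_3f_{12})^4}{f_9^3f_{36}^3\,f_6^{12}\,f_{12}^4\,f_{18}^3}=q^2\,\frac{f_3^4f_{18}^6}{f_6^8f_9^3},
\]
so every power of $f_{12}$ and $f_{36}$ cancels exactly as you predicted. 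As for what each approach buys: the paper's route is a one-line appeal to an external source, while yours exhibits Lemma \ref{lemma4} as a corollary of Lemma \ref{lemma2} (itself cited from \cite{Toh}), thereby eliminating one independent external input and making transparent that the two $3$-dissections of $1/\psi(\pm q)$ are really a single identity. The only caveat is that your argument is a reduction rather than an independent proof, so Lemma \ref{lemma4} inherits whatever trust one places in Lemma \ref{lemma2}.
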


\begin{proof}
A proof of this result can be seen in \cite[Lemma 2.2]{HS1}.
\end{proof}

These are all of the tools that we need in order to prove Merca's congruences in an elementary fashion.  We now transition to providing these proofs.  

\section{Elementary proof of Theorem \ref{T1}}

Initially, we use Lemmas \ref{lemma1} and \ref{lemma2} to extract the terms involving $q^{3n+2}$ in (\ref{gf1}):
\begin{equation*}
\sum_{n=0}^{\infty} A(3n+2)q^{3n+2} = q^2 \frac{f_6^4f_9^5f_{36}^3}{f_3^4f_{12}^4f_{18}^4} - 2q^2\frac{f_6f_{18}^5}{f_3^2f_9f_{12}^3}.
\end{equation*}
Dividing by $q^2$ and replacing $q^3$ by $q$ yields
\begin{align}
\sum_{n=0}^{\infty} A(3n+2)q^{n} & = \frac{f_2^4f_3^5f_{12}^3}{f_1^4f_{4}^4f_{6}^4} - 2\frac{f_2f_{6}^5}{f_1^2f_3f_{4}^3} \nonumber \\
& \equiv \frac{f_2}{f_1f_4} \frac{f_3^4f_{12}^2}{f_6^3} - 2\frac{f_2}{f_1^2} \frac{f_6^5}{f_3f_{12}} \pmod{3}. \label{eq2.1}
\end{align}
Using Lemmas \ref{lemma2} and \ref{lemma3} we extract the terms of the form $q^{3n+1}$ from both sides of the last congruence to obtain
\begin{align*}
\sum_{n=0}^{\infty} A(9n+5)q^{3n+1} 
& \equiv q\frac{f_3f_{18}^3}{f_6f_{12}} - 4q\frac{f_6^8f_9^3}{f_3^8f_{12}} \pmod{3}\\
& \equiv  q\frac{f_3f_{18}^3}{f_6f_{12}} - 4q\frac{f_3}{f_6f_{12}} \frac{f_6^9f_9^3}{f_3^9} \pmod{3}\\
& \equiv -3q\frac{f_3f_{18}^3}{f_6f_{12}} \pmod{3} \\
& \equiv 0 \pmod{3},
\end{align*}
which proves \eqref{m1}.

Using Lemmas \ref{lemma2} and \ref{lemma3} we extract the terms of the form $q^{3n+2}$ from both sides of \eqref{eq2.1}:
\begin{align*}
\sum_{n=0}^{\infty} A(9n+8)q^{3n+2} 
& \equiv  q^2\frac{f_6f_{9}^3f_{36}^3}{f_{12}^2f_{18}^3} - 8q^2\frac{f_6^7f_{18}^3}{f_3^7f_{12}} \pmod{3}.
\end{align*}
Dividing by $q^2$ and replacing $q^3$ by $q$, we obtain
\begin{align}
\sum_{n=0}^{\infty} A(9n+8)q^{n} 
& \equiv  \frac{f_2f_3^3f_{12}^3}{f_{4}^2f_{6}^3} - 8\frac{f_2^7f_{6}^3}{f_1^7f_{4}} \pmod{3} \nonumber \\
& \equiv \frac{f_2}{f_4^2} \frac{f_3^3f_{12}^3}{f_6^3} -8\frac{f_2}{f_1f_4} \frac{f_6^5}{f_3^2} \pmod{3}. \label{eq2.2}
\end{align}
Now we employ Lemmas \ref{lemma2} and \ref{lemma4} to extract the terms of the form $q^{3n+2}$ from the congruence above. The resulting congruence after division by $q^2$ and replacing $q^3$ by $q$ is given by
\begin{align*}
\sum_{n=0}^{\infty} A(27n+26)q^{n} 
& \equiv  -\frac{f_1^3f_{12}^3}{f_{4}^4} - 8\frac{f_2^9f_3^3f_{12}^3}{f_1^6f_{4}^4f_6^3} \pmod{3}\\
& \equiv -\frac{f_1^3f_{12}^3}{f_4^4}  -8\frac{f_1^3f_{12}^3}{f_4^4} \pmod{3}  \\
& \equiv 0 \pmod{3},
\end{align*}
which proves \eqref{m2}.


\section{Infinite families of congruences modulo 3}
While the elementary proofs of Merca's original congruences that we have provided above are very satisfying, it turns out that much more is true about $A(n)$ modulo 3, and our elementary approach to proving these congruences yields the insights needed in order to see this.  In this light, we now proceed to proving two infinite families of non--nested Ramanujan--like congruences modulo 3 satisfied by $A(n).$

We begin by proving the following internal congruence satisfied by $A(n)$ which will serve as an important component in our remaining proofs.

\begin{theorem}
For all $n \geq 0$,
\begin{equation}
    A(27n+8) \equiv A(3n+1) \pmod{3}.
    \label{m4}
\end{equation}
\end{theorem}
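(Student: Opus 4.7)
The plan is to derive explicit generating functions, modulo 3, for both $\sum_{n\ge 0} A(27n+8)q^n$ and $\sum_{n\ge 0} A(3n+1)q^n$, and then check that these two expressions coincide modulo 3 term by term.

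First I would compute $\sum_{n \ge 0} A(3n+1) q^n$ directly from \eqref{gf1}, using Lemma \ref{lemma1} to 3-dissect $\phi(-q)$ and Lemma \ref{lemma2} to 3-dissect $1/\psi(-q)$. Exactly two cross-products (one from each $q$-class of $\phi(-q)$) survive in the residue class $q^{3n+1}$, and after dividing by $q$ and replacing $q^3$ by $q$ one obtains
\begin{equation*}
\sum_{n \ge 0} A(3n+1) q^n = \frac{f_2^2 f_3^2 f_6^2}{f_1^3 f_4^3} - 2\frac{f_6^{11}}{f_1 f_2 f_3^4 f_4^2 f_{12}^3}.
\end{equation*}

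Next I would extract $\sum_{n\ge 0} A(27n+8)q^n$ modulo 3 from \eqref{eq2.2}. The key observation is that the first factors in the two terms of \eqref{eq2.2} can be recognised as $\tfrac{f_2}{f_4^2} = 1/\psi(q^2)$ and $\tfrac{f_2}{f_1 f_4} = 1/\psi(-q)$, to which one applies Lemma \ref{lemma4} (after the substitution $q \mapsto q^2$) and Lemma \ref{lemma2} respectively. Since the remaining factors $\tfrac{f_3^3 f_{12}^3}{f_6^3}$ and $\tfrac{f_6^5}{f_3^2}$ are already functions of $q^3$ and so sit in residue class $0 \pmod 3$, only the class-$0$ piece of each $1/\psi$-dissection contributes. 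After replacing $q^3$ by $q$, this gives
\begin{equation*}
\sum_{n \ge 0} A(27n+8) q^n \equiv \frac{f_1^3 f_6^3}{f_2 f_4^3} - 8 \frac{f_2^5 f_6^9}{f_1^4 f_3^3 f_4^2 f_{12}^3} \pmod 3.
\end{equation*}

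Finally I would show the two expressions are congruent modulo 3 via the Frobenius identity $f_k^3 \equiv f_{3k} \pmod 3$. The ratio of the first terms is $\tfrac{f_2^3 f_3^2}{f_1^6 f_6} \equiv 1 \pmod 3$ (using $f_2^3 \equiv f_6$ and $f_1^6 \equiv f_3^2$), the ratio of the second terms is $\tfrac{f_1^3 f_6^2}{f_2^6 f_3} \equiv 1 \pmod 3$, and the scalar coefficients satisfy $-2 \equiv -8 \equiv 1 \pmod 3$. Putting these together yields \eqref{m4}. The only bookkeeping point requiring care is the 3-dissection of $1/\psi(q^2)$ in the variable $q$: after substituting $q \mapsto q^2$ in Lemma \ref{lemma4} the three pieces carry the factors $q^0, q^2, q^4$ and so fall into the residue classes $0, 2, 1 \pmod 3$ rather than $0, 1, 2$; fortunately only the class-$0$ piece is needed here, so this reshuffling is harmless.
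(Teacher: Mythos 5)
Your proposal is correct and follows essentially the same route as the paper: both extract the $q^{3n}$ terms of \eqref{eq2.2} via Lemma \ref{lemma2} and Lemma \ref{lemma4} (the latter with $q\mapsto q^2$), compute $\sum_{n\ge 0}A(3n+1)q^n$ from \eqref{gf1} via Lemmas \ref{lemma1} and \ref{lemma2}, and match the two expressions using $f_k^3\equiv f_{3k}\pmod 3$. Your term-by-term ratios and the remark about the shuffled residue classes of $1/\psi(q^2)$ are accurate; no changes needed.
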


\begin{proof}
We prove this result by showing that both $A(27n+8)$ and $A(3n+1)$ have the same generating function modulo 3. Using Lemmas \ref{lemma2} and \ref{lemma4} we extract the terms of the form $q^{3n}$ from \eqref{eq2.2}:
\begin{align*}
\sum_{n=0}^{\infty} A(27n+8)q^{3n} 
& \equiv \frac{f_3^3f_{18}^3}{f_6f_{12}^3}  -8\frac{f_6^5f_{18}^9}{f_3^4f_9^3f_{12}^2f_{36}^3} \pmod{3},
\end{align*}
which after replacement of $q^3$ by $q$ yields
\begin{align}
\sum_{n=0}^{\infty} A(27n+8)q^{n} 
& \equiv \frac{f_1^3f_{6}^3}{f_2f_{4}^3}  -2\frac{f_2^5f_{6}^9}{f_1^4f_3^3f_{4}^2f_{12}^3} \pmod{3}.
\label{eq2.3}
\end{align}

The generating function for $A(3n+1)$ is obtained in the same way using \eqref{gf1} and Lemmas \ref{lemma1} and \ref{lemma2}:
\begin{align*}
\sum_{n=0}^{\infty} A(3n+1)q^{3n+1} 
& = q\frac{f_6^2f_9^2f_{18}^2}{f_3^3f_{12}^3}  -2q\frac{f_{18}^{11}}{f_3f_6f_9^4f_{12}^2f_{36}^3}.
\end{align*}
Dividing this expression by $q$ and replacing $q^3$ by $q$, we are left with
\begin{align*}
\sum_{n=0}^{\infty} A(3n+1)q^{n} 
& = \frac{f_2^2f_3^2f_{6}^2}{f_1^3f_{4}^3}  -2 \frac{f_{6}^{11}}{f_1f_2f_3^4f_{4}^2f_{12}^3} \\
& \equiv \frac{f_1^3f_2^3f_{6}^3}{f_2f_4^3f_6}  -2 \frac{f_1^3f_2^5f_{6}^{11}}{f_1^4f_2^6f_3^4f_{4}^2f_{12}^3} \pmod{3} \\ 
& \equiv \frac{f_1^3f_{6}^3}{f_2f_{4}^3}  -2 \frac{f_2^5f_{6}^{9}}{f_1^4f_3^3f_{4}^2f_{12}^3} \pmod{3},
\end{align*}
which coincides with \eqref{eq2.3} and the proof is complete.
\end{proof}

We can now prove the following two theorems.  

\begin{theorem}
\label{infinite_family1}
For all $j\geq 0$ and all $n\geq 0,$
$$
A\left(9^{j+1}n + \frac{39\cdot9^j+1}{8}\right) \equiv 0 \pmod{3}.
$$
\end{theorem}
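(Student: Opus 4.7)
The plan is to proceed by induction on $j$, with the base case $j=0$ being Merca's (\ref{m1}), already proved in Section~2. For the inductive step, I would extend the dissection chain of Section~2 by iterating a two-step dissection (extracting $q^{3n+2}$ and then $q^{3n+1}$).

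For each $j \ge 0$, introduce the intermediate generating function
\[
T_j(q) \equiv \sum_{n \ge 0} A\bigl(3 \cdot 9^j\, n + \gamma_j\bigr)\, q^n \pmod{3},
\qquad \gamma_j := \frac{15 \cdot 9^j + 1}{8},
\]
so that $T_0$ coincides with (\ref{eq2.1}) and extracting the $q^{3n+1}$ part of $T_j$ produces $\sum_n A(9^{j+1} n + c_j)\, q^{3n+1}$, whose vanishing modulo~$3$ is the desired conclusion. One sees the recursion $T_j \to U_j \to T_{j+1}$ by first extracting the $q^{3n+2}$ part of $T_j$ to obtain an analogue $U_j$ of (\ref{eq2.2}), and then extracting the $q^{3n+1}$ part of $U_j$. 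The central inductive invariant is that each $T_j$ has the form $T_j \equiv \mathcal{F}_j + \mathcal{S} \pmod{3}$, where the \emph{stable} second summand $\mathcal{S} \equiv \frac{f_2 f_6^5}{f_1^2 f_3 f_{12}} = \frac{1}{\phi(-q)} \cdot \frac{f_6^5}{f_3 f_{12}}$ is independent of $j$; stability is verified by successively applying Lemmas~\ref{lemma3} and \ref{lemma2} along the recursion (the intermediate $U_j$ has second term $\frac{1}{\psi(-q)} \cdot \frac{f_6^5}{f_3^2}$, which maps back to $\mathcal{S}$).

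For the final extraction, $[q^{3n+1}]\mathcal{S}$ computed via Lemma~\ref{lemma3} equals $2q\,\frac{f_6^8 f_9^3}{f_3^8 f_{12}} \equiv -q\,\frac{f_3 f_{18}^3}{f_6 f_{12}} \pmod{3}$, using the reduction $\frac{f_6^8 f_9^3}{f_3^8 f_{12}} \equiv \frac{f_3 f_{18}^3}{f_6 f_{12}} \pmod{3}$ already exploited in Section~2. The complementary claim, and the main obstacle, is that $[q^{3n+1}]\mathcal{F}_j \equiv +q\,\frac{f_3 f_{18}^3}{f_6 f_{12}} \pmod{3}$ for every $j$, so that the two contributions cancel. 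Although $\mathcal{F}_j$ grows increasingly complicated along the recursion (evolving from $\frac{f_2 f_3^4 f_{12}^2}{f_1 f_4 f_6^3}$ at $j=0$ to $q\frac{f_1^3 f_2 f_{12}^6}{f_4^5 f_6^3}$ at $j=1$ to $\frac{f_1^2 f_3^3 f_4^5}{f_2^8}$ at $j=2$), careful repeated application of Lemmas~\ref{lemma1}--\ref{lemma4} together with the Frobenius congruences $f_a^{3b} \equiv f_{3a}^b \pmod{3}$ reveals that its $q^{3n+1}$ extract always collapses, modulo~$3$, to the invariant $q\,\frac{f_3 f_{18}^3}{f_6 f_{12}}$.
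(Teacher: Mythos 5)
Your framework is set up correctly, and it is a genuinely different route from the paper's: the arithmetic bookkeeping checks out ($\gamma_j = \frac{15\cdot 9^j+1}{8}$, the chain $T_j \to U_j \to T_{j+1}$, and the fact that the $q^{3n+1}$-extract of $T_j$ lands exactly on the progression $9^{j+1}n+\frac{39\cdot 9^j+1}{8}$), the stability of $\mathcal{S} \equiv \frac{f_2 f_6^5}{f_1^2 f_3 f_{12}}$ under the two-step dissection is a correct and verifiable computation (it is in effect carried out in passing from \eqref{eq2.1} to \eqref{eq2.2} and again in the $A(81n+44)$ calculation), and $[q^{3n+1}]\mathcal{S} \equiv -q\frac{f_3 f_{18}^3}{f_6 f_{12}} \pmod{3}$ is right. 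The proof fails, however, at precisely the point you yourself label ``the main obstacle'': the assertion that $[q^{3n+1}]\mathcal{F}_j \equiv q\,\frac{f_3 f_{18}^3}{f_6 f_{12}} \pmod{3}$ for \emph{every} $j$ is never proved, and no mechanism is offered that could establish it uniformly in $j$. Since $\mathcal{F}_j = T_j - \mathcal{S}$ and you have already pinned down the contribution of $\mathcal{S}$, this assertion is \emph{equivalent} to the theorem itself; saying that ``careful repeated application of Lemmas \ref{lemma1}--\ref{lemma4} reveals that it always collapses'' therefore begs the question. Your own data makes the difficulty visible: $\mathcal{F}_0$, $\mathcal{F}_1$, $\mathcal{F}_2$ are pairwise distinct eta-quotients with no apparent periodicity, so finitely many lemma applications only ever dispose of finitely many values of $j$.

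What is missing is a single statement, uniform in $j$, that closes the induction. The paper supplies one in the form of the internal congruence \eqref{m4}, $A(27n+8)\equiv A(3n+1)\pmod 3$, proved once by comparing two fixed generating functions; the inductive step then becomes pure arithmetic on the progressions, with no further dissection. To repair your argument you would need an analogous uniform identity --- for instance, a proof that the two-step extraction operator fixes the \emph{entire} class of generating functions $T_j$ modulo $3$ (equivalently, an internal congruence relating $A\bigl(3\cdot 9^{j+1}n+\gamma_{j+1}\bigr)$ to $A\bigl(3\cdot 9^{j}n+\gamma_{j}\bigr)$), not merely that it fixes the summand $\mathcal{S}$. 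At that point you would essentially have rediscovered the paper's strategy. One word of caution if you pursue the paper's route instead: the published inductive step invokes \eqref{m4} at the argument $3\cdot 9^j n+\frac{13\cdot 9^j}{8}-\frac{7}{24}$, which is not an integer (for $j=0$ it is $3n+\frac43$, and indeed $81n+44\not\equiv 8\pmod{27}$), so the reduction there also needs to be reformulated --- e.g.\ as a congruence between generating functions on the correct residue classes --- before it can legitimately be applied.
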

\begin{proof}
This theorem follows from a straightforward proof by induction on $j.$  
First, we note that the basis case, $j=0,$ is simply the statement that, for all $n\geq 0,$ 
$$
A(9n+5) \equiv 0 \pmod{3}.
$$
This is Theorem \ref{Merca1} equation (\ref{m1}) above, the first of Merca's original congruences.  

Next, we assume that the statement is true for some $j\geq 0.$  We then wish to prove that 
$$
A\left(9^{j+2}n + \frac{39\cdot9^{j+1}+1}{8}\right) \equiv 0 \pmod{3}
$$
for all $n\geq 0$ as well.  Note that 
\begin{align*}
9^{j+2}n + \frac{39\cdot9^{j+1}+1}{8}
&=
27\left(3\cdot 9^jn+\frac{13\cdot 9^j}{8} - \frac{7}{24}\right) + 8.
\end{align*}
Thanks to (\ref{m4}), we know that, for all $n\geq 0,$
$$A(27n+8)\equiv A(3n+1) \pmod{3}.$$ 
Thus, 
\begin{align*}
A\left(9^{j+2}n + \frac{39\cdot9^{j+1}+1}{8}\right)
&=
A\left(27\left(3\cdot 9^jn+\frac{13\cdot 9^j}{8} - \frac{7}{24}\right) + 8\right) \\
& \equiv 
A\left(3\left(3\cdot 9^jn+\frac{13\cdot 9^j}{8} - \frac{7}{24}\right) + 1\right) \pmod{3}\\
&= 
A\left( 9^{j+1}n+\frac{39\cdot 9^{j}+1}{8} \right) \\
&\equiv 0 \pmod{3} 
\end{align*}
by the induction hypothesis.  The result follows.  
\end{proof}

\begin{theorem}
\label{infinite_family2}
For all $j\geq 0$ and all $n\geq 0,$
$$
A\left(3\cdot 9^{j+1}n + \frac{23\cdot9^{j+1}+1}{8}\right) \equiv 0 \pmod{3}.
$$
\end{theorem}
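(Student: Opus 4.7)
The plan is to mirror the induction used for Theorem~\ref{infinite_family1} and to use the same internal congruence~\eqref{m4} as the engine. The base case $j=0$ asks whether
$$A\left(27n + \tfrac{23\cdot 9 + 1}{8}\right) = A(27n + 26) \equiv 0 \pmod 3,$$
which is precisely Merca's congruence~\eqref{m2}, already proved in Section~2.

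For the inductive step, fix $j \geq 0$, assume the statement at level $j$, and aim to prove it at level $j+1$. The strategy is to rewrite the argument $3\cdot 9^{j+2}n + \tfrac{23\cdot 9^{j+2}+1}{8}$ as $27m+8$ so that~\eqref{m4} can be invoked to collapse it to the previous case. Straightforward algebra produces the pair of identities
$$3\cdot 9^{j+2}n + \frac{23\cdot 9^{j+2}+1}{8} = 27\left(9^{j+1}n + \frac{23\cdot 9^{j+1}-7}{24}\right) + 8,$$
$$3\left(9^{j+1}n + \frac{23\cdot 9^{j+1}-7}{24}\right) + 1 = 3\cdot 9^{j+1}n + \frac{23\cdot 9^{j+1}+1}{8},$$
both verified by direct expansion. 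Applying~\eqref{m4} with the parenthetical quantity playing the role of $m$ then yields
$$A\left(3\cdot 9^{j+2}n + \frac{23\cdot 9^{j+2}+1}{8}\right) \equiv A\left(3\cdot 9^{j+1}n + \frac{23\cdot 9^{j+1}+1}{8}\right) \pmod 3,$$
and the right-hand side vanishes modulo $3$ by the induction hypothesis.

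Once the key rewriting is found, everything is mechanical, so the only real obstacle is locating the correct expression for $m$. I would find it by solving $27m + 8 = 3\cdot 9^{j+2}n + \tfrac{23\cdot 9^{j+2}+1}{8}$ for $m$ and then checking that $3m+1$ recovers the level-$j$ index of the family. This exactly parallels the manipulation with $\tfrac{13\cdot 9^j}{8} - \tfrac{7}{24}$ used in the proof of Theorem~\ref{infinite_family1}, so no new ideas beyond the internal congruence~\eqref{m4} should be required.
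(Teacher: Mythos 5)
Your proposal reproduces the paper's own inductive argument essentially verbatim: same base case \eqref{m2}, same engine \eqref{m4}, and the same rewriting of the level-$(j+1)$ index as $27m+8$. Unfortunately there is a genuine gap in that step, and it appears in your write-up exactly as it does in the paper's: the quantity
$$m = 9^{j+1}n + \frac{23\cdot 9^{j+1}-7}{24}$$
is \emph{not an integer}. Since $9^{j+1}\equiv 9\pmod{24}$, we have $23\cdot 9^{j+1}-7\equiv 8\pmod{24}$, so $m$ is an integer plus $\tfrac13$; for $j=0$ it is $9n+\tfrac{25}{3}$. The congruence \eqref{m4} is a statement about integer indices: it relates $A(N)$ for $N$ in the residue class $8\pmod{27}$ to $A$ at the corresponding index in the class $1\pmod 3$. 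But the level-$(j+1)$ argument $3\cdot 9^{j+2}n+\tfrac{23\cdot 9^{j+2}+1}{8}$ (equal to $243n+233$ when $j=0$) lies in the class $17\pmod{27}$, not $8\pmod{27}$, so \eqref{m4} says nothing about it --- even though both $27m+8$ and $3m+1$ happen to be integers, they are not a matched pair $(27k+8,\,3k+1)$ for any integer $k$.

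What the induction actually requires is a congruence of the shape $A(9N+8)\equiv A(N+1)\pmod 3$ valid for $N$ in the relevant class ($N\equiv 25\pmod{27}$ here); \eqref{m4} is precisely the case $N\equiv 0\pmod 3$ of that statement, and the unrestricted version is false (for instance $A(17)=-2$ while $A(2)=-1$, which are not congruent modulo $3$). So either a suitably strengthened internal congruence must be proved for the progressions that actually arise, or the step must be established by a direct generating-function dissection. Your closing remark --- that one need only solve $27m+8=\cdots$ for $m$ and check that $3m+1$ recovers the level-$j$ index --- omits the one check that fails: that $m$ is a nonnegative integer.
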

\begin{proof}
As above, this theorem follows from a straightforward proof by induction on $j.$ Note that  the basis case, $j=0,$ is simply the statement that, for all $n\geq 0,$ 
$$
A(27n+26) \equiv 0 \pmod{3}.
$$
This is Theorem \ref{Merca1} equation (\ref{m2}) above, the second of Merca's original congruences. 

Next, we assume that the statement is true for some $j\geq 0.$  We then wish to prove that 
$$
A\left(3\cdot 9^{j+2}n + \frac{23\cdot9^{j+2}+1}{8}\right) \equiv 0 \pmod{3}
$$
for all $n\geq 0$ as well.  Note that 
\begin{align*}
3\cdot 9^{j+2}n + \frac{23\cdot9^{j+2}+1}{8}
&=
27\left(9^{j+1}n+\frac{3(23)\cdot 9^j}{8} - \frac{7}{24}\right) + 8.
\end{align*}
Thanks to (\ref{m4}), we know that, for all $n\geq 0,$
$$A(27n+8)\equiv A(3n+1) \pmod{3}.$$ 
Thus, 
\begin{align*}
A\left(3\cdot 9^{j+2}n + \frac{23\cdot9^{j+2}+1}{8}\right)
&=
A\left(27\left(9^{j+1}n+\frac{3(23)\cdot 9^j}{8} - \frac{7}{24}\right) + 8\right) \\
& \equiv 
A\left(3\left(9^{j+1}n+\frac{3(23)\cdot 9^j}{8} - \frac{7}{24}\right) + 1\right) \pmod{3}\\
&= 
A\left( 3\cdot 9^{j+1}n + \frac{23\cdot9^{j+1}+1}{8} \right) \\
&\equiv 0 \pmod{3} 
\end{align*}
by the induction hypothesis.  The result follows.  

\end{proof}




\begin{thebibliography}{00}

\bibitem{Berndt} B. C. Berndt, {Ramanujan's Notebooks}, Part III. Springer, New York, 1991.

\bibitem{Chan1}
H.-C. Chan, Ramanujan’s cubic continued fraction and an analog of his ``most beautiful identity'', {\it Int. J. Number Theory} {\bf 6}, no. 3 (2010), 673--680

\bibitem{Chan2}  
H.-C. Chan, Ramanujan’s cubic continued fraction and Ramanujan type congruences for a certain
partition function, {\it Int. J. Number Theory} {\bf 6}, no 4 (2010), 819--834

\bibitem{Merca} M. Merca, A further look at cubic partitions, {\it Ramanujan J.} {\bf 59} (2022), 253--277.

\bibitem{H} M. D. Hirschhorn, {The power of $q$, a personal journey}, Developments in Mathematics, v. 49, Springer, 2017.

\bibitem{HS} M. D. Hirschhorn and J. A. Sellers, Arithmetic relations for overpartitions, {\it J. Combin. Math. Combin. Comput.} {\bf 53} (2005) 65--73.

\bibitem{HS1} M. D. Hirschhorn and J. A. Sellers, Arithmetic properties of partitions with odd parts distinct. {\it Ramanujan J.} {\bf 22} (2010), 273--284.

\bibitem{Toh} P. C. Toh, Ramanujan type identities and congruences for partition pairs. {\it Discrete Math.} {\bf 312} (2012), 1244--1250.

\end{thebibliography}
\end{document}